\newcommand{\be}{\begin{equation}}
\newcommand{\ee}{\end{equation}}
\newcommand{\beq}{\begin{eqnarray}}
\newcommand{\eeq}{\end{eqnarray}}
\newtheorem{claim}{Claim}[section]
\newtheorem{thm}{Theorem}[section]
\newtheorem{lma}{Lemma}[section]
\newtheorem{prop}{Proposition}[section]
\newtheorem{cor}{Corollary}[section]
\newtheorem{defn}{Definition}[section]
\theoremstyle{remark}
\newtheorem{rem}{Remark}[section]
\numberwithin{equation}{section}
\def\be{\begin{equation}}
\def\ee{\end{equation}}
\def\bee{\begin{equation*}}
\def\eee{\end{equation*}}
\def\K{K\"ahler }
\def\Ric{\text{\rm Ric}}
\def\Rm{\text{\rm Rm}}
\def\tr{\operatorname{tr}}
\def\e{\varepsilon}
\begin{document}
\title[]
{Deformation of Hermitian metrics}

 \author{Man-Chun Lee}
\address[Man-Chun Lee]{Department of Mathematics, Northwestern University, 2033
Sheridan Road, Evanston, IL 60208}
\email{mclee@math.northwestern.edu}

\author{Ka-Fai Li
}
\address[Ka-Fai Li]{Morningside Center of Mathematics, Academy of Mathematics and Systems Science, Chinese Academy of Sciences, Beijing, 100190, P.R. China}
 \email{kfli@amss.ac.cn}

\renewcommand{\subjclassname}{
  \textup{2010} Mathematics Subject Classification}
\subjclass[2010]{Primary 32Q10; Secondary 53C55
}

\date{\today}

\begin{abstract} 
In this work, we study the deformation of Hermitian metrics with Chern connection. By adapting the conformal perturbation method of Aubin and Ehrlich
to Hermitian setting, we prove that Hermitian metrics with quasi-positive (resp. quasi-negative) second Chern-Ricci curvature can be deformed to one with positive (resp. negative) curvature. 
\end{abstract}

\keywords{Hermitian metrics, deformation, second Ricci curvature}

\maketitle

\markboth{}{Deformation of Hermitian metrics}
\section{introduction}

In differential geometry, it is  important to determine those differentiable manifolds which admit metrics of strictly positive or negative curvature. The existence of metrics with curvature of definite sign will often impose strong restriction on the underlying manifolds. In complex geometry, the holomorphic structure are often characterized by various positivity notions in
complex differential geometry and algebraic geometry. For instance, thanks to the Yau's solution \cite{Yau1978} to the Calabi conjecture, it is now known that the existence of \K metric with positive Ricci curvature on $M$ is equivalent to $M$ being Fano. 

On the other hand, the holomorphic sectional curvature also carry much information of the holomorphic structure.  Indeed, thanks to the recent breakthrough by Wu-Yau \cite{WuYau2016}, it is now known that \K manifolds with negative or quasi-negative holomorphic sectional curvature are projective and have ample canonical line bundle \cite{DiverioTrapani2016,TosattiYang2017,WuYau2016-2}. This settles down a long-standing conjecture of Yau
affirmatively. For more related recent works, we refer interested readers to  \cite{HeierLuWong2010,HeierLuWong2016,
HeierLuWongZheng2017,LeeStreets2019,
YangZheng2016}. On the positive side, it is also conjectured by Yau \cite{Yau1982} that compact \K manifolds with
positive holomorphic sectional curvature must be projective and rationally connected. This was solved recently by Yang \cite{Yang2018} where he introduced the concept of RC-positivity for abstract vector bundles. Many interesting properties and applications using the idea of RC-positivity had also been studied and explored, see \cite{Matsumura2018-1,Matsumura2018-2,
Matsumura2018-3,Yang2018,Yang2018-2,
Yang2018-3,Yang2018-4}.  Along the same spirit, Ni-Zheng also introduced \cite{NiZheng2018,NiZheng2018-2} various notions of Ricci curvature and scalar curvature to obtain rational connectedness of compact \K manifolds.

In Hermitian geometry, it is natural to consider the Chern connection of Hermitian metrics which is the connection compatible with both the metric and complex structure. When the Hermitian metric $g$ is non-K\"ahler, there are four notions of Chern-Ricci curvature associated to the Chern connection due to the presence of torsion. The first and second Chern-Ricci curvature are particularly important to the geometric structure. More precisely, the first Ricci curvature $\Ric(g)$ represents the first Chern class of the canonical line bundle while the second Ricci curvature $S(g)$ is elliptic with respect to the Hermitian metric $g$ and deeply related to its differential geometric structure.


We are interested to understand the second Chern-Ricci curvature $S$ of a Hermitian metric. As an analog of the Calabi-Yau theorem, Yang \cite{Yang2018} proved that if a compact \K manifold $M$ admits a smooth Hermitian metric with positive second Chern-Ricci curvature $S$, then $M$ is projective and rationally connected. This generalized the earlier works of Campana \cite{Campana1992} and Koll\'ar-Miyaoka-Mori \cite{KMM1992} which states that Fano manifolds are rationally connected. Recently in \cite{Yang2020}, the result was generalized by weakening positivity to quasi-positivity. In  \cite{Yang2018-3}, Yang conjectured that a projective and rationally connected manifold $M$ must admit a Hermitian metric with positive second Chern-Ricci curvature.

In this paper, motivated by the above mentioned works and conjecture, we study the deformation of Hermitian metrics. In Riemannian geometry, the deformation on metric with quasi-positive Ricci curvature was studied by
 Aubin \cite{Aubin1970} and Ehrlich \cite{Ehrlich1976}. By adapting the conformal perturbation method in their works to Hermitian setting, we have the the deformation on Hermitian metric with quasi-positive second Chern-Ricci curvature. More generally, we have the following.
\begin{thm}\label{deform-Rm}
Let $M$ is a compact complex manifold and $g_0,\tilde g$ are two smooth Hermitian metric (possibly different) on $M$ such that $\tr_{\tilde g}R^{(TM,g_0)}$ is quasi-positive (resp. quasi-negative). Then there is another Hermitian metric $g_1$ conformal to $g_0$ such that $\tr_{\tilde g}R^{(TM,g_1)}$ is positive (resp. negative) on $M$.
\end{thm}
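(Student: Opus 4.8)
The plan is to carry out the conformal perturbation of Aubin and Ehrlich in the Chern setting. Since we are allowed to modify $g_0$ only by a conformal factor, write $g_1=e^{f}g_0$ with $f\in C^\infty(M,\R)$ to be chosen. In a local holomorphic frame the Chern connection form $h^{-1}\p h$ of a Hermitian metric $h$ on $TM$ changes by $h^{-1}\p h\mapsto (e^{f}h)^{-1}\p(e^{f}h)=h^{-1}\p h+(\p f)\,\mathrm{Id}$, so the Chern curvature satisfies $\Theta_{e^{f}g_0}=\Theta_{g_0}-(\p\bar\p f)\,\mathrm{Id}_{TM}$ as an $\mathrm{End}(TM)$-valued $(1,1)$-form. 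Lowering the bundle index with $g_1=e^{f}g_0$ and tracing the form part against $\tilde g$ gives the variation formula
\[
\tr_{\tilde g}R^{(TM,g_1)}=e^{f}\Big(\tr_{\tilde g}R^{(TM,g_0)}-(\Delta_{\tilde g}f)\,g_0\Big),\qquad \Delta_{\tilde g}f:=\tilde g^{i\bar j}\p_i\p_{\bar j}f .
\]
As $e^{f}>0$, this reduces the theorem to finding $f$ with $\tr_{\tilde g}R^{(TM,g_0)}-(\Delta_{\tilde g}f)\,g_0>0$; that is, writing $\lambda(x)$ for the lowest eigenvalue of the Hermitian form $\tr_{\tilde g}R^{(TM,g_0)}$ relative to $g_0$ at $x$, to finding $f$ with $\Delta_{\tilde g}f<\lambda$ pointwise on $M$. (Should the conventions for $\Theta_h$ or $\Delta_{\tilde g}$ be the opposite ones, replace $f$ by $-f$.)

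Now $\lambda$ is a continuous (Lipschitz) function and, by quasi-positivity of $\tr_{\tilde g}R^{(TM,g_0)}$, we have $\lambda\ge0$ on $M$ with $\lambda>0$ on a nonempty open set. To produce $f$ I would use the Fredholm theory of the elliptic operator $\Delta_{\tilde g}$ on the compact manifold $M$: by the strong maximum principle $\ker\Delta_{\tilde g}$ consists of the constants, so $\Delta_{\tilde g}$ has index $0$ and one-dimensional cokernel, and --- this is Gauduchon's observation, the same one used in the existence of Gauduchon metrics --- the cokernel is spanned by a smooth positive density $\mu=\rho\,dV_{\tilde g}$, $\rho>0$. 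Hence $\Delta_{\tilde g}f=\psi$ has a smooth solution precisely when $\int_M\psi\,\mu=0$. Choose $p$ with $\lambda(p)>0$, a coordinate ball $B\ni p$ with $\lambda\ge a>0$ on $B$, and $\eta\in C_c^\infty(B)$ with $0\le\eta\le a$ and $\eta(p)=a$; then $0\le\eta\le\lambda$ on $M$ and $\int_M\eta\,\mu>0$. Put $\psi:=\eta-c$ with $c:=\big(\int_M\eta\,\mu\big)/\mu(M)>0$, so that $\int_M\psi\,\mu=0$ and $\psi<\eta\le\lambda$ on $M$. Solving $\Delta_{\tilde g}f=\psi$ and setting $g_1=e^{f}g_0$ gives $\tr_{\tilde g}R^{(TM,g_1)}=e^{f}\big(\tr_{\tilde g}R^{(TM,g_0)}-\psi\,g_0\big)>0$, as wanted. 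The quasi-negative case is identical after reversing inequalities: there one instead needs $\Delta_{\tilde g}f>\Lambda$, where $\Lambda\le0$ (with $\Lambda<0$ somewhere) is the largest eigenvalue of $\tr_{\tilde g}R^{(TM,g_0)}$ relative to $g_0$, and one uses $\psi:=-\eta+c$.

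The only genuine difficulty is arranging the compatibility condition $\int_M\psi\,\mu=0$ while keeping $\psi$ strictly below $\lambda$; this is exactly where quasi-positivity enters, since positivity of $\mu$ together with $\lambda\ge0$, $\lambda\not\equiv0$ forces $\int_M\lambda\,\mu>0$ and hence leaves room to subtract a positive constant. The remaining ingredients are soft: the conformal variation identity is a short local computation, and the solvability of $\Delta_{\tilde g}f=\psi$ together with the positivity of the cokernel density is classical (cf.\ Gauduchon). The points worth checking carefully when writing the details are the sign and frame conventions in the variation formula and the fact that $\tr_{\tilde g}R^{(TM,g_1)}$ is Hermitian in the bundle indices, so that ``positive'' is meaningful; both are routine.
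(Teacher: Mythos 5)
Your proof is correct, but it takes a genuinely different route from the paper. The paper follows Aubin--Ehrlich quite literally: it makes a local conformal bump $g=e^{-tf(\rho)}g_0$ supported in a small geodesic ball, uses the Schwarz-lemma-type Hessian bound $\Delta_{\tilde g}\rho\geq -C_n$ (Tosatti) to show that the outer annulus of the ball gains strict positivity while nonnegativity is preserved globally, and then iterates this over a chain of balls $B_{\tilde g}(p,r_m)$ exhausting $M$; the key computation is the same exact identity you found, namely that the normalized variation of $\tilde g^{k\bar l}R_{k\bar l i\bar j}$ under a conformal change is $\tilde\Delta F\cdot(g_0)_{i\bar j}$ with all curvature reaction terms cancelling. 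You instead integrate that identity once and for all --- $e^{-f}\tr_{\tilde g}R^{(TM,e^fg_0)}=\tr_{\tilde g}R^{(TM,g_0)}-(\Delta_{\tilde g}f)\,g_0$ is exact, not merely infinitesimal, because the Chern connection changes affinely under conformal scaling --- and then solve the single linear equation $\Delta_{\tilde g}f=\psi$ with $\psi<\lambda$, $\int_M\psi\,\mu=0$, using Gauduchon's theorem that the cokernel of the Chern Laplacian is spanned by a positive density. This is the tensor-valued upgrade of exactly the argument the paper attributes to Yang for the Chern scalar curvature (the remark after Corollary 1.2's proof), with quasi-positivity entering through $\int_M\lambda\,\mu>0$ just as you say. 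Your route is shorter and avoids injectivity radius, cutoff constructions, and the covering induction entirely; what the paper's local method buys in exchange is locality and explicit $C^k$-smallness of the deformation (stated in Theorem 5.1) and, as the paper remarks, applicability to complete noncompact manifolds with bounded geometry and injectivity radius bounded below, where the global Fredholm/Gauduchon argument is not available. (Your construction in fact also yields $C^k$-smallness by replacing $\psi$ with $\delta\psi$ for small $\delta>0$, though you do not need this for the statement as given.) The two points you flag as needing care --- the sign convention and the Hermitian symmetry of $\tr_{\tilde g}R^{(TM,g_1)}$ --- both check out against the paper's conventions $\Gamma_{ij}^k=g^{k\bar l}\partial_ig_{j\bar l}$, $R_{i\bar jk}{}^l=-\partial_{\bar j}\Gamma_{ik}^l$.
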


For the definition of $\tr_g R^{(TM,h)}$ and quasi-positivity, we refer to section \ref{background}.
As a special case, we have the deformation result for the second Chern-Ricci curvature $S$. This also provides a supporting evidence to Yang's conjecture on the existence of metric with positive second Chern-Ricci curvature. 
\begin{cor}\label{deform-S}
Suppose $(M,g)$ is a compact Hermitian manifold with quasi-positive(resp. quasi-negative) $S(g)$. Then there is another Hermitian metric $\tilde g$ such that $S(\tilde g)>0$ (resp. $<0$).
\end{cor}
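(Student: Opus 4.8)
The plan is to view $S$ as the special case of the quantity $\tr_{\tilde g}R^{(TM,g_0)}$ in Theorem~\ref{deform-Rm} in which the two metrics coincide. Recall from Section~\ref{background} that for any Hermitian metric $h$ on $M$, the second Chern-Ricci curvature is exactly the $h$-trace of the Chern curvature of the holomorphic tangent bundle $(TM,h)$ taken in its $(1,1)$-form slots, i.e. $S(h)=\tr_h R^{(TM,h)}$. Thus the hypothesis that $S(g)$ is quasi-positive (resp. quasi-negative) says precisely that $\tr_{\tilde g}R^{(TM,g_0)}$ is quasi-positive (resp. quasi-negative) for the choice $g_0=\tilde g=g$.

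First I would apply Theorem~\ref{deform-Rm} with $g_0=\tilde g=g$. This yields a Hermitian metric $g_1=e^{\phi}g$, conformal to $g$, with $\tr_g R^{(TM,g_1)}$ positive (resp. negative) on $M$. Next I would upgrade this to a statement about $S(g_1)=\tr_{g_1}R^{(TM,g_1)}$: since the trace in $\tr_h$ is contracted only against the $(1,1)$-form indices and $g_1^{i\bar j}=e^{-\phi}g^{i\bar j}$, one has, as Hermitian forms on $TM$,
\be
S(g_1)=\tr_{g_1}R^{(TM,g_1)}=e^{-\phi}\,\tr_g R^{(TM,g_1)}.
\ee
Because $e^{-\phi}>0$ everywhere on $M$, the Hermitian form $S(g_1)$ is positive-definite (resp. negative-definite) exactly when $\tr_g R^{(TM,g_1)}$ is, which was arranged in the previous step. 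Hence $S(g_1)>0$ (resp. $S(g_1)<0$), and $g_1$ is the metric $\tilde g$ asserted by the corollary.

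There is no genuine obstacle beyond Theorem~\ref{deform-Rm} itself; the only points needing (minor) attention are the identification $S(h)=\tr_h R^{(TM,h)}$ coming from the definitions in Section~\ref{background}, and the elementary observation that rescaling the metric used to take the trace by a positive conformal factor does not affect the sign of the resulting Hermitian form. If one prefers, the identity $\tr_{g_1}\eta=e^{-\phi}\tr_g\eta$ can be checked pointwise in a frame that is orthonormal for $g$, in which $g_1$ is simply $e^{\phi}$ times the identity.
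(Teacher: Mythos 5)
Your proof is correct and is essentially the paper's own argument: the paper's proof of Corollary \ref{deform-S} is the single sentence that it follows from Theorem \ref{deform-Rm} by taking $\tilde g=g_0=g$ and using that the resulting $g_1$ is conformal to $g_0$, which is exactly your two steps. You have merely made explicit the conformal-trace identity $\tr_{g_1}\eta=e^{-\phi}\tr_g\eta$ that the paper leaves implicit.
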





Together with \cite[Corollary 3.7]{Yang2018}, this implies the following RC-positivity of vector bundles under quasi-positive condition.
\begin{cor}
Let $M^n$ be a compact complex manifold with complex dimension $n$. Suppose there are two Hermitian metric $g$ and $\tilde g$ (possibly different) such that $\tr_{\tilde g}R^{(TM,g_0)}$ is quasi-positive, then there is  Hermitian metric $h$ so that 
\begin{enumerate}
\item $(TM^{\otimes k},h^{\otimes k})$ is RC-positive for every $k\geq 1$;
\item $(\Lambda^p TM,\Lambda^p h)$ is RC-positive for every $1\leq p\leq n$. 
\end{enumerate}
\end{cor}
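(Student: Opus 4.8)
The plan is to obtain this corollary by feeding the conclusion of Theorem \ref{deform-Rm} directly into \cite[Corollary 3.7]{Yang2018}. First I would apply Theorem \ref{deform-Rm} with $g_0=g$: since $\tr_{\tilde g}R^{(TM,g)}$ is assumed quasi-positive, the positive case of the theorem produces a Hermitian metric $h$ conformal to $g$ with $\tr_{\tilde g}R^{(TM,h)}>0$ everywhere on $M$. Thus $(TM,h)$, together with the auxiliary metric $\tilde g$, satisfies the strict positivity hypothesis of Yang's criterion.

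Next I would invoke \cite[Corollary 3.7]{Yang2018} with $E=TM$, fibre metric $h$, and the trace formed against $\tilde g$. That result asserts exactly that positivity of $\tr_{\tilde g}R^{(TM,h)}$ as an endomorphism of $TM$ forces $(TM^{\otimes k},h^{\otimes k})$ to be RC-positive for every $k\ge 1$ and $(\Lambda^pTM,\Lambda^p h)$ to be RC-positive for every $1\le p\le n$, which are precisely statements (1) and (2). The genuine content is therefore located in Theorem \ref{deform-Rm} and in \cite{Yang2018}: the former upgrades quasi-positivity to strict positivity, while the latter supplies the subtle passage from positivity of a single trace to RC-positivity of tensor and exterior powers, RC-positivity being a property that is not in general stable under such operations.

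The one point needing care — and the closest thing to an obstacle — is bookkeeping of conventions: I would check that the sign normalization of $R^{(TM,\cdot)}$ and the meaning of $\tr_{\tilde g}(\cdot)$ fixed in Section \ref{background} agree with those of \cite{Yang2018}, and that Yang's corollary is indeed phrased for a trace taken against an auxiliary metric $\tilde g$ possibly distinct from the bundle metric $h$. This should hold because RC-positivity of $(TM^{\otimes k},h^{\otimes k})$ and of $(\Lambda^pTM,\Lambda^p h)$ is a pointwise condition on the curvature of those bundles alone, with $\tilde g$ entering only through the hypothesis. Modulo this verification, the corollary follows at once.
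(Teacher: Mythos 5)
Your argument is exactly the paper's: the corollary is stated as an immediate consequence of Theorem \ref{deform-Rm} (which upgrades quasi-positivity of $\tr_{\tilde g}R^{(TM,g_0)}$ to strict positivity for a conformal metric) combined with \cite[Corollary 3.7]{Yang2018}, and the paper offers no further proof beyond that one-line deduction. Your additional remark about checking sign and trace conventions against \cite{Yang2018} is a sensible sanity check but does not change the route.
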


As an corollary of RC-positivity, we have the following implication on complex structure when $M$ is a K\"ahler using \cite[Theorem 1.4]{Yang2018}, giving an alternative proof to \cite[Theorem 1.1]{Yang2020}. 
\begin{cor}
Let $M$ is a compact \K manifold and $g_0,\tilde g$ are two smooth Hermitian metric on $M$ such that $\tr_{\tilde g}R^{(TM,g_0)}$ is quasi-positive, then $M$ is projective and rationally connected. In particular, $M$ is simply connected.
\end{cor}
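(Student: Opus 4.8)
The plan is to deduce this corollary from the RC-positivity corollary stated immediately above, together with Yang's structure theorem. First I would apply that corollary in the case $k=1$ (equivalently, $p=1$): the hypothesis that $\tr_{\tilde g}R^{(TM,g_0)}$ is quasi-positive — precisely the input needed — produces a Hermitian metric $h$ on $M$ for which $(TM,h)$ is RC-positive. Concretely, this $h$ is the conformal deformation $g_1$ of $g_0$ furnished by Theorem~\ref{deform-Rm}, and its RC-positivity comes from \cite[Corollary 3.7]{Yang2018} applied to the positive tensor $\tr_{\tilde g}R^{(TM,g_1)}$.

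Next, since $M$ is assumed to be \K, I would invoke \cite[Theorem 1.4]{Yang2018}, which asserts that a compact \K manifold whose holomorphic tangent bundle (or suitable exterior/tensor powers thereof) is RC-positive is projective and rationally connected. The only point that requires attention is to match the exact form of RC-positivity demanded by that theorem with what the preceding corollary supplies; but that corollary in fact yields RC-positivity of all the bundles $TM^{\otimes k}$ and $\Lambda^pTM$ simultaneously, so whichever formulation is used, the hypothesis is satisfied. This gives the first assertion, and in particular reproves \cite[Theorem 1.1]{Yang2020} without passing through the Monge-Amp\`ere type arguments there.

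Finally, for simple connectedness I would appeal to the classical fact that a smooth projective rationally connected variety over $\C$ is simply connected, due to Campana \cite{Campana1992} and Koll\'ar-Miyaoka-Mori \cite{KMM1992}; combined with the previous step this finishes the proof. I do not expect a genuine obstacle in this corollary: all the analytic substance is already carried by Theorem~\ref{deform-Rm} and by Yang's results, so the argument is a short chain of implications. The one item worth double-checking is simply that the quasi-positivity hypothesis on $\tr_{\tilde g}R^{(TM,g_0)}$, allowing $g_0\neq\tilde g$, is verbatim the hypothesis of the RC-positivity corollary — which it is by construction.
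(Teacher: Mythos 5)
Your proposal is correct and follows exactly the route the paper intends: the paper gives no separate written proof for this corollary, merely noting it follows from the preceding RC-positivity corollary combined with \cite[Theorem 1.4]{Yang2018}, with simple connectedness a standard consequence of rational connectedness. Your chain of implications matches this verbatim.
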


The analogous result for the first Chern-Ricci curvature $\Ric$ is not true in general. A counter-example was pointed out by Ehrlich in \cite[Theorem 4]{Ehrlich1976-2} where the Hirzebruch surface $\Sigma_2$ is non-Fano but supports a \K metric with quasi-positive $\Ric$ by the construction of Yau \cite{Yau1974}. This demonstrated a fundamental difference between the first and second Chern-Ricci curvature in the non-\K setting. With a stronger curvature assumption, the deformation of the first Ricci curvature is possible. In \cite{Lee2019}, the first author showed that a Hermitian metric $g$ with quasi-negative $\Ric$ can be deformed to one with negative $\Ric$ if in addition $g$ has Griffths non-positive Chern curvature by using a choice of Hermitian geometric flow introduced by Streets-Tian \cite{StreetsTian2011}. Similar result concerning Griffths non-negativity of Chern curvature was proved by Ustinovskiy \cite{Ustinovskiy2019} which generalized the earlier works by Mok \cite{Mok} and Bando \cite{Bando} in the \K case. Indeed, deformation using parabolic flows was found to be powerful to study the underlying manifold. For related works in Hermitian geometry, we refer interested readers to \cite{BedulliVezzoni2017,CalamaiZhou2020,
Gill2011,
LiYuanZhang,PhongPicardZhang2018,
PhongPicardZhang2016,
PhongPicardZhang2016-2,
PhongPicardZhang2018-2,StreetsTian2010,Streets,
TosattiWeinkove2015} and the references therein. The curvature often tends to become positive along the geometric flow. In contrast, the elliptic method employed here seems to be more flexible in the negative case.


The paper is organized as follows: In section 2, we will collect the formulas about the Chern connection. In section 3, we will derive the variational formula for the first, second Chern-Ricci curvature. In section 4, we will consider the deformation inside the injectivity radius. In section 5, we will prove the global deformation theorems.

{\it Acknowledgement}: The authors would like to thank Professor Xiaokui Yang for suggesting the problem and stimulating discussions. The first author was partially supported by NSF grant 1709894.

\section{Chern connection}\label{background}
In this section, we collect some formulas for the Chern connection to avoid notational inconsistency. Let $(M,g)$ be a Hermitian manifold, the {\it Chern connection} $\nabla$ of $g$ is the connection such that $\nabla g=\nabla J=0$ and the torsion has no $(1,1)$ component. In local holomorphic coordinates $\{z^i\}$, the coefficients $\Gamma$ of $\nabla$ is given by
$$\Gamma_{ij}^k=g^{k\bar l}\partial_i g_{j\bar l}.$$
The {\it torsion} of $g$ is given by $T_{ij}^k=\Gamma_{ij}^k-\Gamma_{ji}^k$. We say that $g$ is \K if the torsion vanishes, $T\equiv 0$. 

The \textit{Chern curvature tensor} of $g$ is defined by $$R_{i\bar jk}\,^l=-\partial_{\bar j}\Gamma_{ik}^l.$$
We raise and lower indices by using metric $g$. The first Chern-Ricci curvature is defined by $$R_{i\bar j}=g^{k\bar l}R_{i\bar j k\bar l}=-\partial_i \partial_{\bar j}\log \det g$$ while the second Ricci curvature is defined by $$S_{i\bar j}=g^{k\bar l}R_{ k\bar li\bar j}.$$
Note that if $g$ is not K\"ahler, then $R_{i\bar j}$ is not necessarily equal to $S_{i\bar j}$. And the Chern-scalar curvature $R$ is defined to be $R=g^{i\bar j}R_{i\bar j}=g^{k\bar l}S_{k\bar l}=g^{i\bar j}g^{k\bar l}R_{i\bar j k\bar l}$. In the non-\K setting, there are two more notions of Chern-Ricci curvature associated to the Chern connection but they are not elliptic with respect to $g$ in general. In this note, we will only focus on the first and second Chern-Ricci curvature. In general, we can also trace the tangent bundle component using a different metric. For notational convenience, we denote $$\tr_{\tilde g}R^{(TM,g)}=\tilde g^{k\bar l}R_{k\bar l i\bar j}.$$ 
In this note, all Hermitian metrics are smooth.

\begin{defn}
Let $M$ be a complex manifold and $A_{i\bar j}$ is a tensor on $M$. We say that $A_{i\bar j}$ is quasi-positive (resp. quasi-negative) if $(A_{i\bar j})$ is non-negative (resp. non-positive) everywhere and positive (resp. negative) at some point on $M$. 
\end{defn}

\section{variation formulas for the Chern curvature}
In this section, we will collect some variational formula for the Chern connection along variation of Hermitian metrics.
\begin{lma}\label{evo-Rm}
Suppose $g(t)$ is a family of Hermitian metrics such that $\partial_t g_{i\bar j}=-h_{i\bar j}$, then we have 
\begin{equation*}
\partial_t R_{k\bar li\bar j}=\frac{1}{2}\left(\nabla_k\nabla_{\bar l}+\nabla_{\bar l}\nabla_k \right) h_{i\bar j}-\frac{1}{2} \left(R_{k\bar li}\,^r h_{r\bar j}+R_{k\bar l}\,^{\bar s}_{\bar j} h_{i\bar s}\right).
\end{equation*}
\end{lma}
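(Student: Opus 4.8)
The plan is to compute $\partial_t R_{k\bar l i\bar j}$ directly from the defining formula for the Chern curvature in terms of the connection coefficients, namely $R_{i\bar j k}{}^l = -\partial_{\bar j}\Gamma^l_{ik}$, and then convert the resulting non-tensorial-looking expression back into covariant derivatives. First I would differentiate $\Gamma^k_{ij} = g^{k\bar l}\partial_i g_{j\bar l}$ in $t$: using $\partial_t g_{i\bar j} = -h_{i\bar j}$ and $\partial_t g^{k\bar l} = g^{k\bar q}g^{p\bar l}h_{p\bar q}$, one finds that $\partial_t \Gamma^k_{ij}$ equals a tensor, which I would want to express as $-g^{k\bar l}\nabla_i h_{j\bar l}$ (this is the standard variation-of-connection computation, and the point is that although $\partial_t\Gamma$ superficially has bare derivatives $\partial_i$, everything combines into the Chern-covariant derivative $\nabla_i h_{j\bar l}$ because the extra $\Gamma$-terms are exactly produced by differentiating $g^{k\bar l}$). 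Denote this $\partial_t\Gamma^l_{ik} = -g^{l\bar m}\nabla_i h_{k\bar m}$, equivalently $\partial_t \Gamma_{ik,\bar m} := g_{l\bar m}\partial_t\Gamma^l_{ik} = -\nabla_i h_{k\bar m}$ after lowering.

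Next I would differentiate the curvature. From $R_{i\bar j k}{}^l = -\partial_{\bar j}\Gamma^l_{ik}$ we get $\partial_t(R_{i\bar j k}{}^l) = -\partial_{\bar j}(\partial_t\Gamma^l_{ik})$. Lowering the index with $g_{l\bar m}$ and accounting for $\partial_t g_{l\bar m} = -h_{l\bar m}$ gives $\partial_t R_{i\bar j k\bar m}$ in terms of $\partial_{\bar j}\nabla_i h_{k\bar m}$ plus a curvature correction coming from $\partial_{\bar j}(g_{l\bar m})$ acting on $\Gamma$; converting the bare $\partial_{\bar j}$ into $\nabla_{\bar j}$ introduces further $\Gamma$-terms which I expect to package precisely into $\nabla_{\bar j}\nabla_i h_{k\bar m}$ plus Chern curvature terms contracted with $h$. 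At this stage one should arrive at something like $\partial_t R_{k\bar l i\bar j} = \nabla_{\bar l}\nabla_k h_{i\bar j} - R_{k\bar l i}{}^r h_{r\bar j}$ (after relabeling indices $i\bar j k\bar m \to k\bar l i\bar j$ to match the statement), using the Bianchi-type symmetry $R_{k\bar l i\bar j} = R_{i\bar l k\bar j}$ and the commutation of covariant derivatives on the curvature side.

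Finally, to obtain the manifestly symmetric form stated in the lemma I would symmetrize. The expression $\nabla_{\bar l}\nabla_k h_{i\bar j} - R_{k\bar l i}{}^r h_{r\bar j}$ and its "conjugate-transpose" partner $\nabla_k\nabla_{\bar l} h_{i\bar j} - R_{k\bar l}{}^{\bar s}{}_{\bar j} h_{i\bar s}$ should both equal $\partial_t R_{k\bar l i\bar j}$ — the first by the computation above, the second by running the same computation starting instead from $R_{k\bar l i}{}^{\bar j} = -\partial_k\Gamma$-type identities, or equivalently by taking conjugates and using the Hermitian symmetry $\overline{R_{k\bar l i\bar j}} = R_{l\bar k j\bar i}$ together with $\overline{h_{i\bar j}} = h_{j\bar i}$. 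Averaging the two expressions yields exactly
\[
\partial_t R_{k\bar l i\bar j} = \tfrac12\bigl(\nabla_k\nabla_{\bar l} + \nabla_{\bar l}\nabla_k\bigr)h_{i\bar j} - \tfrac12\bigl(R_{k\bar l i}{}^r h_{r\bar j} + R_{k\bar l}{}^{\bar s}{}_{\bar j} h_{i\bar s}\bigr).
\]
The main obstacle I anticipate is purely bookkeeping: tracking all the $\Gamma$-correction terms generated when (i) differentiating inverse metrics and (ii) promoting bare $\partial$'s to $\nabla$'s, and checking that the leftover non-covariant pieces cancel in pairs rather than surviving — in the Hermitian (non-\K) case one must be careful because $\Gamma$ has only holomorphic lower indices, so $\partial_{\bar j}\Gamma^l_{ik}\neq 0$ and the torsion does not automatically drop out, but since we only differentiate along the pure holomorphic/antiholomorphic slots here the torsion terms should not actually enter this particular formula. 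Establishing the equality of the two one-sided expressions (the symmetrization step) is the only place a genuine identity, rather than a routine manipulation, is invoked.
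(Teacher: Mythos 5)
Your proposal is correct and follows essentially the same route as the paper: compute the variation of the connection $\partial_t\Gamma^l_{ik}=-g^{l\bar m}\nabla_i h_{k\bar m}$, deduce $\partial_t R_{k\bar l i\bar j}=\nabla_{\bar l}\nabla_k h_{i\bar j}-R_{k\bar l i}{}^r h_{r\bar j}$ (the curvature term coming from differentiating the lowered metric index), and then symmetrize via the commutation identity, which is exactly equivalent to your conjugation argument. One caveat: the ``Bianchi-type symmetry'' $R_{k\bar l i\bar j}=R_{i\bar l k\bar j}$ you invoke is false for non-K\"ahler Hermitian metrics (the two sides differ by $\partial_{\bar l}T_{ki}^r$ terms), but you never actually need it --- passing from $\partial_t R_{i\bar j k\bar m}$ to $\partial_t R_{k\bar l i\bar j}$ is pure relabeling, since in the convention $S_{i\bar j}=g^{k\bar l}R_{k\bar l i\bar j}$ the first index pair is already the differentiation pair.
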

\begin{proof}
The computation is standard, we include here for reader's convenience. 
\begin{equation}
\begin{split}
\partial_t R_{k\bar li\bar j}
&=\partial_t \left(R_{k\bar l i}\,^r g_{r\bar j} \right)\\
&=-  h_{r\bar j}R_{k\bar li}\,^r-g_{r\bar j} \partial_{\bar l}\partial_t \Gamma_{ki}^r\\
&=- h_{r\bar j}R_{k\bar li}\,^r+ \nabla_{\bar l}\nabla_{ k}h_{i\bar j}\\
&=\frac{1}{2}\left(\nabla_k\nabla_{\bar l}+\nabla_{\bar l}\nabla_k \right) h_{i\bar j}-\frac{1}{2} \left(R_{k\bar li}\,^r h_{r\bar j}+R_{k\bar l}\,^{\bar s}_{\bar j} h_{i\bar s}\right).
\end{split}
\end{equation}
\end{proof}



\begin{rem}
By tracing the vector bundle component in Lemma \ref{evo-Rm}, we would have 
\begin{equation}
\begin{split}
\partial_t R_{i\bar j}&=\Delta_g h_{i\bar j}+\left(T_{pi}^k\nabla^p h_{k\bar j}+T_{\bar q\bar j}^{\bar l}\nabla^{\bar q} h_{i\bar l}+g^{p\bar q}T_{pi}^k T_{\bar q\bar j}^{\bar l}h_{k\bar l} \right)\\
&\quad +h^{k\bar l}R_{ i\bar jk\bar l}-\frac{1}{2} \left(S_{i}^k h_{k\bar j}+S^{\bar l}_{\bar j} h_{i\bar l} \right).
\end{split}
\end{equation}

Using the trick by Ustinovskiy \cite{Ustinovskiy2019}, the true variational component is at $h*\Rm$ which do not have any sign in general. This also explains why the deformation on $\Ric$ is false unless we have additional information on $\Rm$, see \cite{Lee2019}.
\end{rem}

\section{local deformation of hermitian metrics}

In this section, we discuss the local deformation of Hermitian metrics. To simplify our argument, we will work on the ball of injectivity radius so that the distance function is smooth there although this is not necessary in the Hermitian content.

Let $(M^n,g)$ be a closed Hermitian manifold with complex dimension $n$. For $p\in M$, we will use $B_g(p,r)$ to denote the $g$-geodesic ball of radius $r$ and $A_g(p,r_1,r_0)=B_g(p,r_1)\setminus B_g(p,r_0)$. Note that Hermitian metric $g$ is at the same time a Riemannian metric. For each $p\in M$, we can find $i_0=\textbf{inj}_g(p)$ such that the exponential map $\exp|_p: B_{\mathbb{R}^{2n}}(0,i_0)\rightarrow B_g(p,i_0)$ is a diffeomorphism. In particular, the square distance function $d_g(p,\cdot)^2$ is smooth on $B_g(p,i_0)$. Let $r<i_0$ and denote $\rho(x)=r^2-d_g(p,x)^2$. We use $d_{g}(p,\cdot)^2$ instead of $d_g(p,\cdot)$ to avoid the non-smooth issue at $x=p$. 

\begin{prop}\label{local-deform}
There is $\mu(n)\in (0,1/2)$ such that the following holds: Suppose $\tilde g$ is a Hermitian metric on $M$ with
\begin{equation}
\sup_M |\Rm(\tilde g)|+|T(\tilde g)|^2<2.
\end{equation}
And $ g_0$ is another Hermitian metric on $M$ such that $\tr_{\tilde g} R^{(TM,g_0)}\geq0$ (resp. $\leq 0$) on $B_{\tilde g}(p,r)$ where $r<\min\{\mathbf{inj}_{\tilde g}(p),1\}$. Then for any $\e>0$ and $k\in \mathbb{N}$, there is another smooth Hermitian metric $g_1$ conformal to $g_0$ such that 
\begin{enumerate}
\item[(a)] $g_1$ agrees with $g_0$ outside $B_{\tilde g}(p,r)$;
\item[(b)] $||g_1-g_0||_{C^k(M,\tilde g)}<\e$;
\item[(d)] $\tr_{\tilde g} R^{(TM,g_1)}>0$ (resp. $< 0$) on $A_{\tilde g}(p,r,(1-\mu)r)$.
\end{enumerate}
\end{prop}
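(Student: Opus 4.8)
The plan is to perform a conformal perturbation $g_1 = e^{\varphi} g_0$ where $\varphi$ is a bump function supported in $B_{\tilde g}(p,r)$, built out of the function $\rho(x) = r^2 - d_{\tilde g}(p,x)^2$. First I would record how $\tr_{\tilde g}R^{(TM,g)}$ transforms under a conformal change of $g_0$. Writing $g_1 = e^{\varphi}g_0$, the Chern curvature of $g_1$ satisfies $R^{(TM,g_1)}_{k\bar l i\bar j} = e^{\varphi}\bigl(R^{(TM,g_0)}_{k\bar l i\bar j} - (\partial_k\partial_{\bar l}\varphi)(g_0)_{i\bar j} - \text{(terms with one derivative of }\varphi\text{)} - (g_0)_{k\bar l}\partial_i\partial_{\bar j}\varphi\bigr)$; more precisely one uses $\Gamma^{r}_{ki}(g_1) = \Gamma^r_{ki}(g_0) + \delta_k^r \partial_i\varphi$, so $R^{(TM,g_1)}_{k\bar l i}{}^{r} = e^{\varphi}\bigl(R^{(TM,g_0)}_{k\bar l i}{}^r - \delta^r_k(g_0)_{i\bar j}\text{-lowered }\partial_k\partial_{\bar l}\varphi - \cdots\bigr)$, and tracing the $(k,\bar l)$ slot against $\tilde g$ gives
\begin{equation*}
\tr_{\tilde g}R^{(TM,g_1)}_{i\bar j} = e^{\varphi}\Bigl(\tr_{\tilde g}R^{(TM,g_0)}_{i\bar j} - (\Delta_{\tilde g}\varphi)\,(g_0)_{i\bar j} - (\tr_{\tilde g}g_0)\,\partial_i\partial_{\bar j}\varphi + (\text{first-order in }\varphi)\Bigr),
\end{equation*}
where I absorb the precise lower-order coefficients into bounded tensors depending only on $g_0,\tilde g$ and their derivatives. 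The key point: on the annulus $A_{\tilde g}(p,r,(1-\mu)r)$ I want the bracket to be strictly positive (resp. negative), and this will follow if $-\Delta_{\tilde g}\varphi$ is large and positive there while $\partial_i\partial_{\bar j}\varphi$ is controlled and $\varphi$ plus its gradient are small.

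Next I would choose the conformal factor. Take $\varphi = \varepsilon_1 \eta(\rho)$ for a suitable smooth cutoff profile $\eta$ with $\eta \equiv 0$ near $\rho = 0$ (i.e. near $\partial B_{\tilde g}(p,r)$, giving condition (a)) and a small amplitude $\varepsilon_1$ to be fixed. Computing $\Delta_{\tilde g}\varphi = \varepsilon_1(\eta''(\rho)|\nabla\rho|^2 + \eta'(\rho)\Delta_{\tilde g}\rho)$, and using that on $A_{\tilde g}(p,r,(1-\mu)r)$ one has $d_{\tilde g}(p,\cdot) \approx r$, $|\nabla d_{\tilde g}|^2 = 1$, so $|\nabla\rho|^2 = 4 d_{\tilde g}^2 \approx 4r^2$ is bounded below, while $\Delta_{\tilde g}\rho = -\Delta_{\tilde g}(d_{\tilde g}^2)$ is bounded in terms of the curvature bound $\sup|\Rm(\tilde g)| + |T(\tilde g)|^2 < 2$ (this is where the universal constant $\mu(n)$ and the curvature hypothesis enter — a Laplacian comparison / Hessian comparison for the squared distance that holds uniformly once curvature is bounded, valid inside the injectivity radius and for $r < 1$). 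By taking $\eta$ to be, say, a function whose second derivative $\eta''$ is very large and positive on the sub-interval of $\rho$-values corresponding to the annulus while $\eta'$ and $\eta$ stay bounded (a standard ``steep ramp'' bump), one makes $-\Delta_{\tilde g}\varphi \geq \varepsilon_1 C(\eta'')$ dominate, hence the bracket $\geq \varepsilon_1(\eta'' \cdot c(n) - \text{const})(g_0)_{i\bar j} + (\text{nonnegative } \tr_{\tilde g}R^{(TM,g_0)}\text{ term}) > 0$ on the annulus, using that $\tr_{\tilde g}R^{(TM,g_0)} \geq 0$ there. Simultaneously, choosing $\varepsilon_1$ small relative to $\varepsilon$ and the fixed profile $\eta$ (which only depends on $\mu$ and $n$) ensures $\|g_1 - g_0\|_{C^k(M,\tilde g)} = \|(e^{\varphi}-1)g_0\|_{C^k} < \varepsilon$, giving (b). The negative case is identical with $\varphi = -\varepsilon_1\eta(\rho)$.

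The order of steps: (i) establish the conformal transformation identity for $\tr_{\tilde g}R^{(TM,\cdot)}$ with explicit but schematically-bounded lower-order terms; (ii) fix the universal constant $\mu(n)$ and invoke Hessian/Laplacian comparison for $d_{\tilde g}^2$ on $A_{\tilde g}(p,r,(1-\mu)r)$ under the curvature bound, getting uniform upper bounds on $|\nabla\rho|, |\nabla^2\rho|$ and a uniform lower bound $|\nabla\rho|^2 \geq c(n)r^2$ on the annulus; (iii) design the profile $\eta = \eta_{\mu,n}$ with the steep-second-derivative property and amplitude $\varepsilon_1$; (iv) verify positivity on the annulus and smallness in $C^k$, then let $\varepsilon_1 \to 0$. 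The main obstacle I anticipate is step (ii) combined with the bookkeeping in (i): one must make sure the terms in the conformal change that are \emph{first order} in $\varphi$ (those involving $\partial\varphi \otimes \bar\partial\varphi$-type and mixed $\partial\varphi$-times-torsion/connection terms from the Hermitian, non-\K setting) are genuinely lower order — they carry a factor $\varepsilon_1$ or $\varepsilon_1^2$ and bounded geometry, hence are dominated by the $\varepsilon_1 \eta''$ term — and that the Laplacian comparison for $d_{\tilde g}^2$ is applied correctly with the Chern-connection Laplacian $\Delta_{\tilde g}$ versus the Riemannian one (they differ by torsion terms, again bounded by the hypothesis $|T(\tilde g)|^2 < 2$). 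Once the annulus estimate is secured with constants depending only on $n$, the global smallness and the matching condition (a) are routine.
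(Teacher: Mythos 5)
Your strategy is essentially the paper's --- a conformal bump built from $\rho = r^2 - d_{\tilde g}(p,\cdot)^2$, a Chern-connection Hessian comparison for $d_{\tilde g}^2$ (the paper quotes Tosatti's Schwarz-lemma estimate to get $\Delta_{\tilde g}\rho \geq -C_n$), and dominance of the $\eta''|\partial\rho|^2$ term on the annulus --- but the bump as you specify it pushes the curvature the wrong way. The conformal law is exact: since $\Gamma^k_{ij}(e^{\varphi} g_0)=\Gamma^k_{ij}(g_0)+\delta^k_j\partial_i\varphi$, one gets
\begin{equation*}
\tr_{\tilde g}R^{(TM,e^{\varphi}g_0)}_{i\bar j}=e^{\varphi}\left(\tr_{\tilde g}R^{(TM,g_0)}_{i\bar j}-(\Delta_{\tilde g}\varphi)\,(g_0)_{i\bar j}\right),
\end{equation*}
with no $(\tr_{\tilde g}g_0)\,\partial_i\partial_{\bar j}\varphi$ term and no first-order terms at all; so positivity on the annulus requires $\Delta_{\tilde g}\varphi<0$ there. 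With your choice $\varphi=\e_1\eta(\rho)$, $\e_1>0$ and $\eta''$ ``very large and positive'', you get $\Delta_{\tilde g}\varphi=\e_1(\eta''|\partial\rho|^2+\eta'\Delta_{\tilde g}\rho)$ large and \emph{positive} (you yourself note $|\partial\rho|^2\approx 4r^2>0$ on the annulus), hence $-\Delta_{\tilde g}\varphi$ is large and negative: your deformation makes the curvature strictly negative on the annulus, i.e.\ you have swapped the two cases. The fix is to take $\varphi=-\e_1\eta(\rho)$ for the positive case, which is exactly the paper's family $g(t)=e^{-tF}g_0$ with $F=f(\rho)$, $f(s)=e^{-1/s}$.

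Two secondary points. First, ``take $\eta''$ huge while $\eta'$ stays bounded'' is not quantitative enough: on the annulus you need $\eta''(\rho)|\partial\rho|^2>|\eta'(\rho)|\,C_n$ with $|\partial\rho|^2\geq 4(1-\mu)^2r^2$, i.e.\ $\eta''/\eta'\gtrsim C_n r^{-2}$, and $\mu$ is supposed to depend only on $n$ while $r$ may be arbitrarily small. What rescues this is that $\rho\leq\mu(2-\mu)r^2$ on $A_{\tilde g}(p,r,(1-\mu)r)$, so the explicit profile $f(s)=e^{-1/s}$ gives $f''/f'=s^{-2}(1-2s)\gtrsim(\mu r^2)^{-2}\gg C_nr^{-2}$ once $\mu(n)$ is small; a generic ``steep ramp'' does not automatically deliver this uniformly in $r$, so the profile must be pinned down to this degree. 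Second, if $\eta\equiv 0$ on a neighbourhood of $\rho=0$ (rather than merely vanishing to infinite order at $\rho=0$), the curvature is literally unchanged on an outer collar of the annulus and conclusion (d) fails there; the profile must be strictly active on all of $\{0<\rho\leq\mu(2-\mu)r^2\}$. Your worry about torsion and first-order bookkeeping, by contrast, is moot: the displayed identity is exact, and that exactness is precisely what makes the argument clean.
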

\begin{proof}
We first prove the non-negative case. Let $f$ be the smooth function on $\mathbb{R}$ given by
\begin{equation}
f(s)=
\left\{
\begin{array}{ll}
e^{-1/s} \quad&\text{for}\; s\geq 0;\\
0 \quad&\text{for}\; s\leq 0.
\end{array}
\right.
\end{equation}

Define $F(x)=f(\rho)$ where $\rho(x)=r^2-d_{\tilde g}(x,p)^2$. Let us collect some useful inequality on $\rho$. Clearly, we have $|\partial \rho|_{\tilde g}^2=4(r^2-\rho)$. For second order, note that Chern connection coincides with the canonical connection in almost Hermitian geometry. Hence we can apply \cite[Theorem 4.2]{Tosatti2007} to show that
\begin{equation}\label{rho-lap}
 \Delta_{\tilde g} \rho_{i\bar j} \geq -C_n.
\end{equation}

Define a one parameter family of smooth Hermitian metrics by $ g=e^{-tF}g_0$ so that $\partial_t  g=-F g=-h$. Conclusion (a) is trivial. Conclusion (b) can be done by choosing $g_1=g(t_{\e,k})$ where $t_{\e,k}$ is sufficiently small. It remains to establish conclusion (c). First, we note that Lemma \ref{evo-Rm} implies that for $B_{i\bar j}=e^{Ft} \tilde g^{k\bar l}R_{k\bar l i\bar j}$, we have
\begin{equation}\label{equ-S-evo}
\begin{split}
\partial_t B_{i\bar j}
&=FB_{i\bar j}+e^{Ft}\tilde g^{k\bar l} \partial_t R_{k\bar li\bar j}\\
&=FB_{i\bar j}+\frac{e^{Ft}}{2}\tilde g^{k\bar l}  \left(\nabla_k\nabla_{\bar l}+\nabla_{\bar l}\nabla_k \right) h_{i\bar j}-\frac{e^{Ft}}{2}\tilde g^{k\bar l}  \left(R_{k\bar li}\,^r h_{r\bar j}+R_{k\bar l}\,^{\bar s}_{\bar j} h_{i\bar s}\right)\\
&=\tilde \Delta F \cdot (g_0)_{i\bar j}.
\end{split}
\end{equation}

We now claim that $B_{i\bar j}>0$ on $A_{\tilde g}(p,r,(1-\mu) r)$  for some $0<\mu(n)<1/2$ and $t>0$. We will specify the choice of $\mu(n)$ later. 

For any $z\in A_{\tilde g}(p,r,(1-\mu) r)$ and $v\in T^{1,0}_z M$ so that $g_0(v,\bar v)=1$. By \eqref{equ-S-evo} and \eqref{rho-lap}, if $t>0$, then 
\begin{equation}
\begin{split}
 B(t)_{v\bar v}-B(0)_{v\bar v}
 &=t\cdot  \tilde \Delta F   \\
 &=t\cdot \left(f''|\partial \rho|_{g_0}^2+f'\Delta_{g_0}\rho\right) \\
 &\geq t\rho^{-4} e^{-1/\rho}\left[4(r^2-\rho)-8\rho(r^2-\rho)-C_n\rho^2 \right]\\
&>0
\end{split}
\end{equation}
provided that $\mu(n)$ is sufficiently small. This completes the proof of the non-negative case. The non-positive case can be proved analogously by considering the one parameter family $g(t)=e^{tF}g_0$ instead.
\end{proof}
\begin{rem}\label{deform-rem}
In application, we will work on $B_{\tilde g}(p,r)$ where $\tr_{\tilde g} R^{(TM,g)}>0$ (resp. $<0$) on $B_{\tilde g}\left(p,(1-\mu )r\right)$ and will choose $\e>0$ small enough so that $\tr_{\tilde g} R^{(TM,g(t))}>0$ (resp. $<0$) on $B_{\tilde g}(p,r)$ after deformation. Here we will require $\e$ to be sufficiently small depending also on the positivity on the smaller ball.
\end{rem}

\section{Deformation on compact manifolds}\label{main}

In this section, we will carry the deformation process on $M$. We will follow and modify the argument in \cite{Ehrlich1976}. We will denote the original Hermitian metric as $g$. Since $M$ is closed, the injectivity radius of $x\in M$ has a uniform positive lower bound. Denote
$$\mathbf{inj}_{\tilde g}(M)=\inf_M\left\{ \mathbf{inj}_{\tilde g}(x)\right\}>0.$$


Now we are ready to prove the main deformation theorem.
\begin{thm}\label{global-deform}
Let $M$ is a compact complex manifold and $g_0,\tilde g$ are two smooth Hermitian metric on $M$ such that $\tr_{\tilde g}R^{(TM,g_0)}$ is quasi-positive (resp. quasi-negative). Then for all $\e>0, k\in \mathbb{N}$, there is another Hermitian metric $\hat g_0$ conformal to $g_0$ such that $\tr_{\tilde g}R^{(TM,\hat g_0)}$ is positive (resp. negative) on $M$ and $||\hat g_0-g_0||_{C^k(M,\tilde g)}<\e$.
\end{thm}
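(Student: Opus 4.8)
The plan is to upgrade the local deformation of Proposition \ref{local-deform} to a global one by a finite covering argument in the spirit of Ehrlich \cite{Ehrlich1976}. First I would fix a normalization: by scaling $\tilde g$ (which does not affect the sign of $\tr_{\tilde g} R^{(TM,g_0)}$, only rescales it) we may assume $\sup_M(|\Rm(\tilde g)|+|T(\tilde g)|^2)<2$, so Proposition \ref{local-deform} applies at every point with a uniform radius bound $r<\min\{\mathbf{inj}_{\tilde g}(M),1\}$. Let $p_0\in M$ be a point where $\tr_{\tilde g}R^{(TM,g_0)}$ is strictly positive (resp. negative). By continuity there is a radius $r_0$, which we take smaller than $\tfrac{1}{2}\min\{\mathbf{inj}_{\tilde g}(M),1\}$, such that $\tr_{\tilde g}R^{(TM,g_0)}>0$ on $B_{\tilde g}(p_0,r_0)$, and then a first application of Proposition \ref{local-deform} (with Remark \ref{deform-rem}) is unnecessary here; instead this initial ball is our seed where positivity already holds with room to spare.

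The core is an inductive expansion of the region of strict positivity. Suppose at some stage we have a Hermitian metric $g$, conformal to $g_0$, with $\tr_{\tilde g}R^{(TM,g)}\ge 0$ everywhere, $\tr_{\tilde g}R^{(TM,g)}>0$ on an open set $U$, and $\|g-g_0\|_{C^k(M,\tilde g)}$ small. If $U\neq M$, pick a point $q$ on $\partial U$ (or near it) and a ball $B_{\tilde g}(q,r)$, $r$ uniform, such that $\tr_{\tilde g}R^{(TM,g)}>0$ on the smaller concentric ball $B_{\tilde g}(q,(1-\mu)r)$, using that $U$ is open and contains points arbitrarily close to $q$. Apply Proposition \ref{local-deform} and Remark \ref{deform-rem} to produce $g_1$ conformal to $g$ (hence to $g_0$), equal to $g$ outside $B_{\tilde g}(q,r)$, with $\|g_1-g\|_{C^k}$ as small as we like, and with $\tr_{\tilde g}R^{(TM,g_1)}>0$ on all of $B_{\tilde g}(q,r)$; since the deformation is supported in the ball and the pointwise curvature only changed there — increasing strictly on the annulus and remaining $>0$ on the core by the smallness choice in Remark \ref{deform-rem} — we get $\tr_{\tilde g}R^{(TM,g_1)}\ge 0$ globally and $>0$ on $U\cup B_{\tilde g}(q,r)$, a strictly larger open set. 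The key quantitative point is that the new ball always has the same uniform radius $r$ (depending only on $n$, $\mathbf{inj}_{\tilde g}(M)$ and the curvature bound on $\tilde g$), so after finitely many — say $N$, by compactness of $M$ — such steps the strictly positive region exhausts $M$. Allotting an error budget of $\e/N$ (in fact a geometrically decaying one, to be safe with the $C^k$ norms not accumulating) to each step yields the final metric $\hat g_0 = g_N$, conformal to $g_0$, with $\tr_{\tilde g}R^{(TM,\hat g_0)}>0$ on $M$ and $\|\hat g_0 - g_0\|_{C^k(M,\tilde g)}<\e$.

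I expect the main obstacle to be making the bookkeeping in the induction genuinely uniform and ensuring it terminates. One must check that the number $N$ of balls needed can be bounded a priori — this follows because each step enlarges the strictly positive open set by a full ball of fixed radius $r$ centered at a boundary point, so a standard covering/packing argument on the compact manifold $M$ caps the number of steps — and that at each step Remark \ref{deform-rem} can be invoked, i.e. that one can always locate a ball $B_{\tilde g}(q,r)$ whose $(1-\mu)$-core already lies in the current positivity region while the full ball reaches outside it. A secondary technical point is that Proposition \ref{local-deform} was stated for deforming $g_0$, but it applies verbatim with $g_0$ replaced by the current metric $g$ at each step since $g$ is also a smooth Hermitian metric with $\tr_{\tilde g}R^{(TM,g)}\ge 0$; the conformal factors multiply, so the end result is conformal to the original $g_0$. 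Finally one must verify the $C^k(M,\tilde g)$ estimates are additive over the finitely many conformal perturbations, which is immediate from the triangle inequality once each perturbation is taken with the appropriate fraction of the error budget. The quasi-negative case is identical, using the $g(t)=e^{tF}g_0$ family from Proposition \ref{local-deform}.
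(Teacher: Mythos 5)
Your proposal is correct and follows essentially the same strategy as the paper: iterate Proposition \ref{local-deform} together with Remark \ref{deform-rem} to spread strict positivity outward from a seed ball, keeping the curvature nonnegative globally and controlling the $C^k$ distance to $g_0$ with a geometrically decaying error budget. The only difference is organizational: the paper makes termination transparent by expanding concentric balls $B_{\tilde g}(p,r_m)$ with $r_m=(1+m\mu(2-\mu)^{-1})r_0$ and covering each annulus by finitely many balls of radius $r_0/(1-\mu)$, whereas you expand a general open set one boundary ball at a time and appeal to a packing argument for finiteness --- which does work, since successive centers chosen at depth $(1-\mu)r$ are $\mu r/2$-separated, but it is the one step you should write out carefully.
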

\begin{proof}
We will prove the non-negative case. The non-positive case can be proved using the same argument. 

By rescaling, we may assume 
$$\sup_M |\Rm(\tilde g)|+|T(\tilde g)|^2\leq 1.$$
Let $\mu(n)$ be the constant from Proposition \ref{local-deform}. Suppose $p\in M, 0<r_0<\frac14\min \left\{\mathbf{inj}_{\tilde g}(M),1\right\}$ be such that $\tr_{\tilde g}R^{(TM,g_0)}>0$ on $B_{\tilde g}(p,r_0)$.

\begin{claim}
Let $ r_m=(1+{m\mu }(2-\mu)^{-1})r_0$, then there exists Hermitian metrics $\{g_m\}_{m\in \mathbb{N}}$ conformal to $g_0$ such that 
\begin{enumerate}
\item $\tr_{\tilde g}R^{(TM,g_m)}>0$ on $B_{\tilde g}(p,r_m)$;
\item $\tr_{\tilde g}R^{(TM,g_m)}\geq 0$ on $M$;
\item $ ||g_m- g_0||_{C^k(M,\tilde g)}\leq  \e\sum_{i=0}^m 2^{-i-1}$.
\end{enumerate}
\end{claim}
\begin{proof}
[Proof of claim]
The statement is trivially true when $m=0$. Suppose it is true for some $m\in \mathbb{N}_{\geq 0}$. Let $\{p_l\}_{l=1}^{N_m}\subset B_{\tilde g}(p,r_m-r_0)$ so that 
\begin{equation}
\label{set-in}A_{\tilde g}(p,r_{m+1},r_m)\subset \bigcup_{l=1}^{N_m} B_{\tilde g}\left(p_l, \frac{r_0}{1-\mu}\right).
\end{equation}

Now we work on $B_{\tilde g}(p_1,\frac{r_0}{1-\mu})$. By our choice of $r_0$, we have $\frac{r_0}{1-\mu} < \mathbf{inj}_{\tilde g}(p_1)$ and $\tr_{\tilde g}R^{(TM,g_m)}>0$ on $B_{\tilde g}(p_1,r_0)$. Hence, we may apply Proposition \ref{local-deform} and Remark \ref{deform-rem} on $g_m$ so that there is a Hermitian metric $g_{m,1}$ conformal to $g_m$ in which
\begin{enumerate}
\item[(a)] $\tr_{\tilde g}R^{(TM,g_{m,1})}>0$ on $ B_{\tilde g}\left(p_1,\frac{r_0}{1-\mu}\right)\bigcup B_{\tilde g}(p,r_m)$;
\item[(b)] $\tr_{\tilde g}R^{(TM,g_{m,1})}\geq 0$ on $M$;
\item[(c)] $ ||g_{m,1}-g_0||_{\tilde g,\infty}\leq \e \sum_{i=0}^m2^{-i-1}+\e 2^{-m-2}N_m^{-1}$.
\end{enumerate}

Repeat the argument on each $B_{\tilde g}(p_l,r_0)$, $l\leq N_m$ inductively, we will obtain a sequence of Hermitian metrics $\{g_{m,l}\}_{l=1}^{N_m}$ conformal to $g_m$ so that for each $l\in\{1,...,N_m\}$,
\begin{enumerate}
\item[(a)] $\tr_{\tilde g}R^{(TM,g_{m,l})}>0$ on $\bigcup_{i=1}^{l} B_{\tilde g}(p_i,\frac{r_0}{1-\mu})\bigcup B_{\tilde g}(p,r_m)$;
\item[(b)] $\tr_{\tilde g}R^{(TM,g_{m,l})}\geq 0$ on $M$;
\item[(c)] $ ||g_{m,l}-g||_{C^k(M,\tilde g)}\leq \e \sum_{i=0}^m2^{-i-1}+l\e 2^{-m-2}N_m^{-1}$.
\end{enumerate}

Using \eqref{set-in} and the fact that
$$B_{\tilde g}(p,r_{m+1}) \subset {A_{\tilde g}(p,r_{m+1},r_m)}\cup B_{\tilde g}(p,r_m),$$
we have the desired Hermitian metric if we choose $g_{m+1}=g_{m,N_m}$. This proves the claim.
\end{proof}

Since $r_m\rightarrow +\infty$ and $M$ is compact, the process will be terminated at the $N$-th step. Then $\hat g_0=g_N$ will be the desired Hermitian metric that we are looking for.
\end{proof}

\begin{rem}
It is clear from the proof that an analogous result still hold if $M$ is complete non-compact with bounded Chern curvature, bounded torsion and has a uniform injectivity radius lower bound. 
\end{rem}

\begin{proof}[Proof of Corollary \ref{deform-S}]
This follows from Theorem \ref{deform-Rm} by taking $\tilde g=g_0$ and using the fact that the new metric $g_1$ is conformal to $g_0$.
\end{proof}

\begin{rem}The same method also holds for the Chern-scalar curvature $R=\tr_g S=\tr_g \Ric$. This was shown earlier by Yang in \cite[Lemma 3.2]{Yang2019}.
\end{rem}


\end{document}